\newcommand{\Td}{{\mathbb{T}^d}}
\newcommand{\T}{{\mathbb{T}}}
\newcommand{\Sd}{{\mathcal{S}^d}}
\newcommand{\dd}{\,\mathrm{d}}
\newcommand{\HM}[0]{\operatorname{HM}}
\begin{document}
\title[Homogenization of Pucci PDE]{Approximate homogenization of fully nonlinear elliptic PDEs: estimates and numerical results for Pucci type equations}
\author{Chris Finlay}
\author{Adam~M. Oberman}
\date{\today}
\begin{abstract}
  We are interested in the shape of the homogenized operator $\overline F(Q)$ for PDEs which have the structure of a nonlinear Pucci operator. A typical operator is $H^{a_1,a_2}(Q,x) = a_1(x) \lambda_{\min}(Q) + a_2(x)\lambda_{\max}(Q)$.
  Linearization of the operator leads to a non-divergence form homogenization problem, which can be solved by averaging against the invariant measure.  We estimate the error obtained by linearization based on semi-concavity estimates on the nonlinear operator.  These estimates show that away from high curvature regions, the linearization can be accurate.  Numerical results show that for many values of $Q$, the linearization is highly accurate, and that even near corners, the error can be small (a few percent) even for relatively wide ranges of the coefficients.
\end{abstract}

\keywords{Viscosity Solutions, Partial Differential Equations, Homogenization, Pucci Maximal Operator}
\maketitle

\section{Introduction}
In this article we consider fully nonlinear, uniformly elliptic PDEs $F(Q,x)$.
We are interested in approximating the homogenized operator
$\overline  F(Q)$.  We focus on Pucci-type PDE operators in two dimensions.  The
restriction to two dimensions is for computational simplicity and also for
visualization purposes.  We consider periodic coefficients, although in our
numerical experiments we obtained very similar results with random coefficients.

The approach we take is to linearize the operator about the value $Q$,  and to homogenize  the linearized operator $\overline L(Q)$.  The solution of the linear homogenization problem can be expressed  (and in some cases solved analytically) by averaging against the invariant measure.   The result is given by
\[
  \overline {L^Q}(Q) = \int F(Q,x) \rho^Q(x) dx
\]
where $\rho^Q$ is the invariant measure of the corresponding linear problem.
We estimate the linearization error
\[
  E(Q) \equiv \overline F(Q) - \overline{L^Q}(Q),
\]
For convex operators, the analysis gives a one sided bound on the error.
In general, we obtain upper or lower bounds on the error, which depend on generalized semi-concavity/convexity estimates of $F$, as well as on  the solution of the cell problem $u^Q$ for the nonlinear problem.
These results are stated in \Cref{th:main} below.

For theoretical results on nonlinear homogenization, we refer to the review \cite{Engquist2008} as well as recent
works on rates of convergence (for example \cite{Armstrong2014}).  There are fewer works which aim to
determine the values $\overline F(Q)$.  Few analytical results are available.   Numerical homogenizing results for Pucci type operators can be found in  \cite{caffarelli2008numerical} using a least-squares formulation.
We also mention numerical work by \cite{GO04} and \cite{oberman2009homogenization} and \cite{luo2011new} in the first order case, as well as \cite{Froese2009} in the second order linear non-divergence case.

The typical operator we consider herein is defined next.  Below, we consider more operators, including the usual convex Pucci Maximal operator.

\begin{definition}[Fully nonlinear elliptic operator $F(Q,x)$ and linearization]\label{def:F}
  We are given  $F: \mathcal{S}^d \times \mathbb{T}^d \to \R$  which is uniformly elliptic, Lipschitz continuous in the first variable and bounded in the second variable.
  Suppose for a given $Q$, that $\grad_Q F(Q,x)$ exists for all $x$.  Write
  \begin{equation}
    \label{LFaffine}
    L^Q(M,x) = \grad_Q  F(Q,x)\cdot(M-Q) +F(Q,x)
  \end{equation}
  for the affine approximation to $F$ at $Q$.
\end{definition}

Given $Q \in \mathcal{S}^d$, write, for $d=2$, $\lambda_{\min}(Q)$ and $\lambda_{\max}(Q)$
for the smaller, and larger eigenvalues of $Q$, respectively.
\begin{example}[Typical PDE operator]
  Given $\delta > 0$, and periodic functions $a_1(y), a_2(y) \geq \delta$. Define the homogeneous order one PDE operator
  \begin{equation}
    H^{a_1,a_2}(Q,x) = a_1(x) \lambda_{\min}(Q) + a_2(x)\lambda_{\max}(Q)
    \label{eq:HP}
  \end{equation}
  Suppose $Q$ has unit eigenvectors  $v_1, v_2$ corresponding to the eigenvalues $\lambda_{\min}(Q),\lambda_{\max}(Q)$, respectively.  Then the linearization at $Q$, of $H^{a_1,a_2}$ is given by
  \begin{equation}
    L^Q(M,x) = a_1(x) v_1^T M v_1 + a_2(x) v_2^T M v_2
    \label{eq:LinearHP}
  \end{equation}
\end{example}

\begin{remark}[Typical results]
  We consider the case of coefficients which are either (i) periodic checkerboards
  or (ii) random checkerboards.   We compute both the nonlinear homogenization
  $\overline F(Q)$ and the homogenized linear operator $\overline{L^Q}(Q)$.  In
  practice, the numerically computed error  is insignificant, less than \num{1e-8} for values of $Q$, away from regions of high curvature of $F$ with respect to $Q$.  Areas where the error is significant correspond to regions where the  semi-concavity constants are large. A typical result is displayed in Figure~\ref{fig1}.  The solid line is a level set of the homogenized linear operator $ \overline{L^Q}(Q)$.   The dots are numerical computations of  $\overline F(Q)$.   The error is very small, except at one point, which corresponds to a corner of the operator.  (Dashed lines indicate underlying operators which comprise $F(Q,x)$.

  Our analysis depends on the shape of $F(Q,x)$ in $Q$, but not on the pattern of
  the coefficients in $x$.  We also considered the case of stripe coefficients.
  For separable examples, the linear approximation is still effective.  However,
  we also found nonseparable examples  where the linear approximation is poor,
  which we will address in a companion paper \cite{ObermanFinlay_LP} with a closer
  bound.
\end{remark}

\begin{figure}
  \includegraphics[width=.5\textwidth]{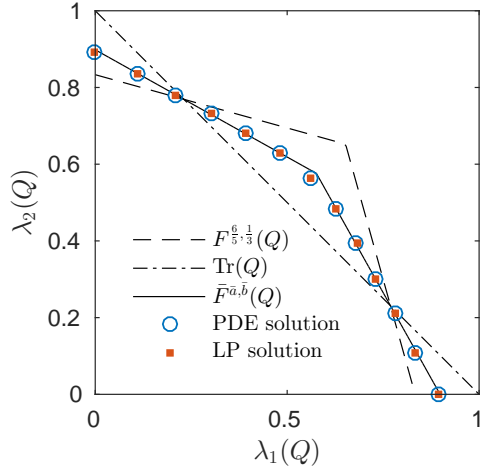}
  \caption{Plot of a single level set of $\overline{L^Q}(Q)$ and $\overline F(Q)$.  This example is typical.  In this case the coefficients are on a random checkerboard. The error is only visible near the corner of the level set of the operator. $F(Q,y)$ a Pucci-type operator, see Definition~\ref{defnPucci} below.  The details of the coefficients can be found in Section~\ref{sec:Numerics}. }
  \label{fig1}
\end{figure}

\subsection{Background: cell problem and linear homogenization}
In this section, we review background material on the cell problem for the nonlinear PDE, and on linear homogenization.  We also give an exact formula in one dimension for a separable operator.

Given $a(y): \T \to \R$, positive,  $a(y)>0$, write  $\HM(a) = \left( \int_\T \frac{\dd y }{a(y)}\right)^{-1}$ for the harmonic mean of $a$.

In the linear case, $\overline L$ can be found by averaging against the
invariant measure, by solving the adjoint equation (see
\cite{bensoussan2011asymptotic} or \cite{Froese2009}), which yields the
following formula.
\begin{lemma}[Linear Homogenization Formula] \label{formula:HM}
  The separable linear operator $L(M,x) = a(x)A_0:M + f(x)$ has invariant
  measure $\rho(x) = \HM(a)/a(x)$ and homogenizes to $\overline L(Q) = \HM(a) A_0:Q + \overline f$, where $\overline f(x) = \int f(x) \dd \rho(x)$.
\end{lemma}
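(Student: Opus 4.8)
The plan is to verify the two claims --- the formula for the invariant measure $\rho$ and the formula for the homogenized operator $\overline L$ --- directly from the cell problem and its adjoint. First I would set up the cell problem: for the one-dimensional separable operator $L(M,x) = a(x) A_0 : M + f(x)$, the corrector $u^Q$ associated with $Q$ solves, up to the additive constant $\overline L(Q)$, the equation
\[
  a(x)\,\bigl(A_0 : Q + (A_0)_{11} u''(x)\bigr) + f(x) = \overline L(Q),
\]
where I am writing out the one-dimensional Hessian of $Q + u$ and keeping only the relevant scalar $(A_0)_{11}$ (in $d=1$, $A_0$ and $Q$ are scalars, so I will just call them $A_0$ and $Q$). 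The point of linear non-divergence homogenization is that $\overline L(Q)$ is the unique constant for which this equation admits a periodic solution $u$, and that constant is obtained by integrating the equation against the density $\rho$ that makes the operator's adjoint annihilate it.

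Next I would identify $\rho$. The linearized operator in non-divergence form is $v \mapsto a(x) A_0 v''$; its formal adjoint acting on densities is $\rho \mapsto (a(x) A_0 \rho)''$ in one dimension (more generally $\rho \mapsto \partial_{ij}(a(x)(A_0)_{ij}\rho)$), and the invariant measure is the periodic solution of the adjoint equation with the normalization $\int_\T \rho = 1$. Setting $(a(x)\rho(x))'' = 0$ with $a\rho$ periodic forces $a(x)\rho(x)$ to be constant; normalizing $\int_\T \rho \dd x = 1$ gives that constant as $\HM(a)$, hence $\rho(x) = \HM(a)/a(x)$, which is exactly the claimed formula and is manifestly a probability density. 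Then, integrating the cell problem against $\rho$: the term $\int_\T a(x)(A_0 u''(x))\rho(x)\dd x = \HM(a) A_0 \int_\T u''(x)\dd x = 0$ by periodicity of $u$, the term $\int_\T a(x)(A_0 : Q)\rho(x)\dd x = \HM(a)(A_0 : Q)$, and $\int_\T f(x)\rho(x)\dd x = \overline f$. Since $\int_\T \overline L(Q)\rho(x)\dd x = \overline L(Q)$, I conclude $\overline L(Q) = \HM(a) A_0 : Q + \overline f$.

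I would present this cleanly by first stating the adjoint/solvability fact (the Fredholm-type alternative for periodic non-divergence operators: solvability of $a A_0 u'' = g - \overline L$ is equivalent to $\int g\rho = \overline L$), citing \cite{bensoussan2011asymptotic} or \cite{Froese2009} as the excerpt already does, and then doing the elementary computation above. The only genuinely substantive ingredient --- and the step I expect to be the main obstacle if one wants a self-contained argument rather than a citation --- is justifying that $\overline L(Q)$ characterized as this average actually coincides with the homogenized limit in the PDE sense (i.e., that the formally-defined constant is the effective operator). For the linear, periodic, uniformly elliptic case this is classical, so here I would simply invoke it; the remainder is the two-line ODE computation for $\rho$ and the integration by parts that kills the corrector term.
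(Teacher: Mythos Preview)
Your argument is correct and is essentially the standard one. Note, however, that the paper does not actually prove this lemma: it is stated as background, with the justification deferred entirely to the references \cite{bensoussan2011asymptotic} and \cite{Froese2009}. What you have written is a self-contained verification that spells out what those references contain for this special separable case, and you even cite the same sources for the one nontrivial ingredient (that the constant obtained by averaging against $\rho$ is indeed the effective operator).

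One small point worth tightening: you carry out the computation in $d=1$, where $(a\rho)''=0$ together with periodicity genuinely forces $a\rho$ to be constant. In higher dimensions the adjoint equation $\partial_{ij}\bigl((A_0)_{ij}\,a(x)\rho(x)\bigr)=0$ does not by itself force $a\rho$ constant; rather, one observes that $\rho = c/a$ is \emph{a} periodic solution (since $a\rho$ is then constant and the equation is trivially satisfied) and then invokes uniqueness of the normalized invariant measure for uniformly elliptic periodic operators to conclude it is \emph{the} solution. You gesture at the general adjoint form in your parenthetical, so this is a one-line fix, but the logical structure in $d>1$ is ``verify and appeal to uniqueness'' rather than ``derive.'' The integration step that kills the corrector, $\int_{\Td} a(x)\,A_0{:}D^2u\,\rho\,\dd x = \HM(a)\,A_0{:}\int_{\Td} D^2u\,\dd x = 0$, goes through unchanged in any dimension.
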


For the nonlinear operator $F$, the homogenized operator is given by solving the cell problem, see~\cite{Evans1989}.
\begin{definition}[Solution of the cell problem]
  Given $F$ uniformly elliptic, for each $Q \in \Sd$, there is a unique value
  $\overline F(Q)$ and a periodic function $u^Q(y)$  which is a viscosity solution of the
  cell problem
  \begin{equation}\label{def:cell_problem}
    F(Q + D^2u^Q(y), y) = \overline F(Q).
  \end{equation}
\end{definition}

\begin{lemma}[Homogenization of linearized operator]\label{def:linear}
  Consider the nonlinear elliptic operator $F(Q,x)$, and suppose for a given
  $Q$, that $\grad_Q F(Q,x)$ exists for all $x$.
  The corresponding linearization at $Q$ is given by \eqref{LFaffine}.
  Let $\rho^Q$  be the corresponding unique invariant probability measure,  which is the solution of the adjoint equation
  \begin{equation}\label{adjoint-equation}
    D^2:(\grad_Q F(Q,y)\rho^Q(y)) = 0,
  \end{equation}
  interpreted in the weak sense.
  Then $\overline{L^Q}(Q)$, the homogenized linearized operator evaluated at $Q$,  is given by
  \begin{equation}\label{LbarQ}
    \overline{L^Q}(Q) = \int_\Td F(Q,y) \dd \rho^Q(y).
  \end{equation}
\end{lemma}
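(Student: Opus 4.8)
The plan is to recognize formula \eqref{LbarQ} as the solvability (Fredholm) condition for the cell problem of the \emph{linear} operator $L^Q$, and to obtain it by dualizing that cell problem against the invariant measure $\rho^Q$. The first point is that $L^Q$, defined by \eqref{LFaffine}, is itself uniformly elliptic: since $F$ is uniformly elliptic, the symmetric matrix $A(y):=\grad_Q F(Q,y)$ is uniformly positive definite on $\Td$, with bounds inherited from the ellipticity of $F$, and $L^Q(\cdot,y)$ is affine with leading matrix $A(y)$. Hence the cell-problem theory of \cite{Evans1989} (in this linear case this is also classical periodic homogenization, cf.\ \cite{bensoussan2011asymptotic}) furnishes the constant $\overline{L^Q}(Q)$ and a periodic corrector $w$ solving $L^Q(Q+D^2w(y),y)=\overline{L^Q}(Q)$; inserting \eqref{LFaffine} and using $L^Q(Q,y)=F(Q,y)$, this is the linear non-divergence equation
\[
  A(y):D^2w(y) = \overline{L^Q}(Q) - F(Q,y)\qquad\text{on }\Td.
\]

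Next I would bring in the invariant measure: by the linear non-divergence theory with periodic coefficients (\cite{bensoussan2011asymptotic,Froese2009}), the adjoint equation \eqref{adjoint-equation} has a unique nonnegative periodic solution $\rho^Q$ normalized by $\int_\Td\rho^Q\dd y=1$. Multiplying the displayed equation by $\rho^Q$, integrating over $\Td$, and integrating by parts twice (no boundary terms, by periodicity) sends the left-hand side to the pairing of $w$ against $D^2:(A\rho^Q)$, which vanishes by \eqref{adjoint-equation}; the right-hand side equals $\overline{L^Q}(Q)-\int_\Td F(Q,y)\dd\rho^Q(y)$ because $\rho^Q$ is a probability measure. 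Rearranging is precisely \eqref{LbarQ}.

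The main obstacle is making the double integration by parts rigorous when the coefficients of $F$ in $x$ are only bounded and measurable, as in the checkerboard examples driving the numerics: then $\rho^Q$ is merely an $L^p$ density and $w\in W^{2,p}(\Td)$, so the identity $\int_\Td w\,D^2:(A\rho^Q)\dd y=0$ has to be read directly off the weak formulation of \eqref{adjoint-equation} tested against $w$, after a routine mollification/approximation step; the ellipticity and existence statements used above are comparatively soft. In the separable case $A(y)=a(y)A_0$ one can bypass the argument altogether: \Cref{formula:HM} gives $\rho^Q=\HM(a)/a$ and $\overline{L^Q}(Q)=\HM(a)\,A_0:Q+\int_\Td\bigl(F(Q,y)-a(y)A_0:Q\bigr)\dd\rho^Q$, and since $\int_\Td a(y)A_0:Q\dd\rho^Q=\HM(a)\,A_0:Q$ this again reduces to \eqref{LbarQ}.
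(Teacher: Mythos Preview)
Your proof is correct and follows essentially the same approach as the paper: the paper's argument cites the references for the fact that $\rho^Q$ solves \eqref{adjoint-equation}, then observes that \eqref{LFaffine} at $M=Q$ gives $L^Q(Q,y)=F(Q,y)$, so integrating against $\rho^Q$ yields \eqref{LbarQ}. The duality step you spell out explicitly (that $\int A(y):D^2 w\,\rho^Q\,\dd y=0$ by integrating by parts against the adjoint equation) is exactly what the paper invokes from the cited references here and later writes out as \eqref{LbarAtPhi} in the proof of Theorem~\ref{th:main}.
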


\begin{proof}
  The invariant measure $\rho^Q$ solves \eqref{adjoint-equation}, see \cite{bensoussan2011asymptotic} or \cite{Froese2009}.  Apply \eqref{LFaffine} at $M=Q$ and then integrate against $\rho^Q$ to obtain the result.
\end{proof}

\section{Main Result}

\subsection{Generalized semiconcavity estimates on the operators}
Consider the uniformly elliptic operator $F(Q,x)$, where $Q \in \mathcal{S}^d$ and $x \in \mathbb{T}^d$.
We assume the following.
\begin{ass}[Quadratically dominated for $F(Q,x)$]\label{def:quaddom}
  Let $F$ be as in Definition~\ref{def:F}.
  Suppose for a given $Q$, that $\grad_Q F(Q,x)$ exists for all $x$.  Write $\| Q \|$ for the Frobenious norm of $Q$.
  We say that $F$ is
  \emph{quadratically dominated above} at $Q$ if there is a bounded function $C^+(Q,x): \mathbb{T}^d \to \R$ such that
  \begin{equation}\label{QDabove}
    F(M,x) - L^Q(M,x)   \leq  C^+(Q,x)\frac {\norm{M-Q}^2}2,
    \quad \text{ for all $(M,x) \in \mathcal{S}^d \times \mathbb{T}^d$}
  \end{equation}
  and similarly, $F$ is
  \emph{quadratically dominated below}  at $Q$ if there is a bounded function $C^-(Q,x): \mathbb{T}^d \to \R$ such that
  \begin{equation}\label{QDbelow}
    F(M,x) - L^Q(M,x)   \geq  C^-(Q,x)\frac {\norm{M-Q}^2}2,
    \quad \text{ for all $(M,x) \in \mathcal{S}^d \times \mathbb{T}^d$}
  \end{equation}
\end{ass}

\begin{remark}
  If $F$ is convex in $Q$, then $C^-(Q,y) = 0$. Similarly if $F$ is concave in
  $Q$, $C^+(Q,y)=0$.  More generally if $F$ is semi-concave, or semi-convex in $Q$, then we can set $C^{\pm}(Q,y) = C^\pm(y)$, to be a constant independent of $Q$.   However, we require the definition above for when the semi-concavity or semi-convexity conditions in $Q$ do not hold, as is the case for the Pucci-type operators defined below.
\end{remark}

\begin{example} Let $x\in \R$ and set
  $f(x) = \max\left\{ a x, b x \right\}$.  Since $f$ is convex, we can take $C^-(x) =0$ in \eqref{QDbelow}.
  We claim that for $x\not=0$, \eqref{QDabove} holds with
  \begin{equation}
    \label{eq:CplusEx1}
    C^+(x) = \frac{\abs{a-b}}{2 x},
  \end{equation}
  and this is the best constant.
  See \Cref{fig:CQ_max_ab}.
\end{example}

\begin{proof}[Derivation of \eqref{eq:CplusEx1}]
  Expand $f(x +y)$ about the point $x$, for $x\not=0$. To test \eqref{QDabove},
  replace the inequality with an equality to obtain a quadratic equation.  By
  requiring that there is only one root, we obtain an equation for the
  discriminant of the quadratic, which can be solved to obtain the result.
\end{proof}

\begin{figure}[t]
  \centering
  \includegraphics[width=0.5\textwidth]{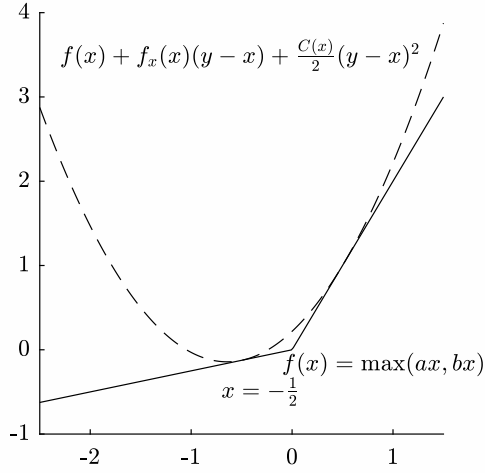}
  \caption{ For the simple example $f(x) = \max\left\{ a x, b
  x \right\}$, the semi-concavity constant is $C(x) = C^+(x) = \frac{|a-b|}{2 x}$.}
  \label{fig:CQ_max_ab}
\end{figure}

\subsection{Main Theorem}

\begin{theorem}  \label{th:main}
  Suppose $F$ satisfies Assumptions~\ref{def:quaddom} and
  $u^Q \in C^{2,\alpha}(\Td)$ is a classical solution.
  Let $\overline F(Q)$ be the homogenized operator at $Q$ and let  $u^Q$ be the corresponding solution of the cell problem given by~\eqref{def:cell_problem}.
  Let the homogenization of the linearization of the operator be given by \eqref{LbarQ} and let $\rho^Q(y)$ be the corresponding  invariant measure of the linearized problem \eqref{LFaffine}.
  Write
  \[
    \overline{ C^\pm}(Q) = \frac 1 2 \int C^\pm(Q,y) \| D^2u^Q(y) \|^2 \dd \rho^Q(y)
  \]
  Then
  \begin{equation}\label{eq:bounds}
    \overline{ C^-}(Q)
    \leq \overline F(Q) - \overline{L^Q}(Q)
    \leq \overline{ C^+}(Q)
  \end{equation}
\end{theorem}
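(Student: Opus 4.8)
The strategy is to feed the cell-problem solution $u^Q$ as a test "correction" into both the nonlinear equation and its linearization, and to exploit the invariant measure $\rho^Q$ as an averaging device that annihilates the linear part. Concretely, I would start from the cell problem $F(Q + D^2 u^Q(y), y) = \overline F(Q)$, which holds pointwise since $u^Q \in C^{2,\alpha}$ is classical, and subtract the linearization $L^Q(Q + D^2 u^Q(y), y)$. By Definition~\ref{def:F}, $L^Q(Q + D^2 u^Q(y), y) = \grad_Q F(Q,y)\cdot D^2 u^Q(y) + F(Q,y)$, so that
\[
  \overline F(Q) - F(Q,y) - \grad_Q F(Q,y)\cdot D^2 u^Q(y)
  = F(Q + D^2 u^Q(y), y) - L^Q(Q + D^2 u^Q(y), y).
\]
Now apply the quadratic domination bounds \eqref{QDabove}, \eqref{QDbelow} with $M = Q + D^2 u^Q(y)$, so that $\|M - Q\| = \|D^2 u^Q(y)\|$; the right-hand side above is sandwiched between $C^-(Q,y)\|D^2 u^Q(y)\|^2/2$ and $C^+(Q,y)\|D^2 u^Q(y)\|^2/2$.

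The second step is to integrate this pointwise inequality against $d\rho^Q(y)$ over $\Td$. Since $\rho^Q$ is a probability measure, $\int \overline F(Q)\dd\rho^Q = \overline F(Q)$. By Lemma~\ref{def:linear}, $\int F(Q,y)\dd\rho^Q(y) = \overline{L^Q}(Q)$. The crucial cancellation is that $\int \grad_Q F(Q,y)\cdot D^2 u^Q(y)\dd\rho^Q(y) = 0$: this is exactly the weak form of the adjoint equation \eqref{adjoint-equation}, namely $D^2 \colon (\grad_Q F(Q,y)\rho^Q(y)) = 0$ tested against $u^Q$. Here I need to be slightly careful that $u^Q$ is an admissible test function for the weak formulation — since $u^Q \in C^{2,\alpha}(\Td)$ and is periodic, an integration by parts (twice) on the torus, with no boundary terms, justifies this. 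After these substitutions the integrated inequality reads
\[
  \overline{C^-}(Q) \;\le\; \overline F(Q) - \overline{L^Q}(Q) \;\le\; \overline{C^+}(Q),
\]
which is precisely \eqref{eq:bounds}.

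The main obstacle, and the only place where genuine care is needed, is the integration-by-parts step establishing $\int \grad_Q F(Q,y)\cdot D^2 u^Q(y)\dd\rho^Q(y) = 0$: one must know that $\rho^Q$ solves the adjoint equation in a weak sense strong enough to pair against $D^2 u^Q$, and that the regularity $u^Q\in C^{2,\alpha}$ combined with boundedness of $\grad_Q F(Q,\cdot)$ and integrability of $\rho^Q$ makes all the integrals finite with vanishing boundary contributions on $\Td$. Everything else — the pointwise subtraction, the application of Assumption~\ref{def:quaddom}, and the use of $\rho^Q$ being a probability measure — is routine. One might also remark, as the introduction anticipates, that in the convex case $C^-(Q,y)\equiv 0$, so the lower bound collapses to $\overline F(Q) \ge \overline{L^Q}(Q)$, recovering the one-sided estimate.
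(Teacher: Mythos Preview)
Your proposal is correct and follows essentially the same approach as the paper: subtract $L^Q(Q+D^2u^Q,y)$ from the cell problem, apply the quadratic domination bounds pointwise, and integrate against the invariant measure $\rho^Q$, using the adjoint equation to kill the term $\int \grad_Q F(Q,y)\cdot D^2 u^Q(y)\dd\rho^Q(y)$. The only cosmetic difference is that the paper keeps $L^Q$ intact and invokes the identity $\int_\Td L^Q(Q+D^2\phi,y)\dd\rho^Q(y)=\int_\Td F(Q,y)\dd\rho^Q(y)$ in one line, whereas you expand $L^Q$ and treat the $F(Q,y)$ and gradient terms separately; the content is the same.
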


\begin{remark}
  In the examples we consider below, $C^\pm(Q,y) \to 0$ as $\dist(|Q|,S) \to
  \infty$, for the singular set of the operator.  This gives control over the homogenization error for many values of $Q$.  Another term in the error is  $\|D^2u^Q\|$.
  In the homogeneous order one case, we have $u^Q = 0$ for $Q = 0$, so a
continuity argument suggests that we may have control of $\|D^2u^Q\|$ for small
values of $Q$. This is the case in one dimension in \cite{ObermanFinlay_LP}, where we obtain an
analytical formula  for $u^Q_{xx}$  through \eqref{separable}, which gives $\abs{u^Q_{xx}}
\leq C|Q|$.

The main theorem is a formal result in the sense that it 
 relies on the fact that $u^Q$ is a
  classical solution, which does not hold in general.   
If $F$ is convex (or concave), then by a
  famous theorem of Krylov and Evans
  \cite{krylov_boundedly_1984,evans_classical_1982}, or~\cite{caffarelli1995fully}, $u^Q  \in C^2(\Td)$. 
However, in general we are only guaranteed $u^Q \in C^{1,\alpha}(\Td)$ \cite{jensen_maximum_1988}.  
\end{remark}

\begin{proof}
  Subtract the linearization of $F$ at $Q$ evaluated at $Q + D^2u^Q(y)$ from the equation for the cell problem \eqref{def:cell_problem}, to obtain
  \begin{equation}    \label{eq:subtractlin}
    \overline F(Q) - L^Q(Q+D^2u^Q,y) = F(Q+D^2u^Q,y) - L^Q(Q+D^2u^Q,y).
  \end{equation}
  From Assumption~\eqref{def:quaddom},
  \begin{equation}
    \overline F(Q) - L^Q(Q+D^2u^Q,y) \leq C^+(Q,y) \frac{\| D^2 u^Q \|^2}{2}
    \label{eq:Fb_above}
  \end{equation}
  and
  \begin{equation}
    \overline F(Q) - L^Q(Q+D^2u^Q,y) \geq C^-(Q,y) \frac{\| D^2 u^Q \|^2}{2}.
    \label{eq:Fb_below}
  \end{equation}
  Now integrate \cref{eq:Fb_above,eq:Fb_below} against the invariant measure
  $\rho^Q$. This yields the upper and lower bounds \eqref{eq:bounds}, where we have used the fact that
  for all $\phi \in C^2(\Td)$,
  \begin{equation}
    \label{LbarAtPhi}
    \int_\Td {L^Q}(Q + D^2\phi,y) \dd\rho^Q(y)  = \int_\Td
    F(Q,y)\dd\rho^Q(y),
  \end{equation}
  which follows from  integration by parts, since $\rho^Q$  solves the adjoint equation~\eqref{adjoint-equation}.
\end{proof}

\subsection{Applications of the main result}
We give two applications of the main result.
In the first example, where the operator is separable,  we have an analytical formula for $\overline{L^Q}(Q)$.  In this case the estimates also simplify.  In the second, nonseparable example, we can find $\overline{L^Q}(Q)$ by solving a single linear homogenization problem, with coefficients given by the linearization \eqref{eq:LinearHP}.

\begin{corollary}
  \label{formula:sep}
  Consider the separable, purely second order operator
  \[
    F(Q, y) = a(y) F_0(Q)
  \]
  for $y \in \R^d$.  Suppose that $F_0$ is quadratically dominated with constants $C^-(Q)$ and $C^+(Q)$.  Then,
  \begin{equation}\label{eq:corbounds}
      \overline{ C^-}(Q)
    \leq
    \overline{F}(Q) - \HM(a) F_0(Q)
    \leq
  \overline{ C^+}(Q)
  \end{equation}
where
\[
    \overline{ C^\pm}(Q) = \frac 1 2 C^\pm(Q)\HM(a) \int_\Td  \| D^2u^Q(y) \|^2
    \frac{1}{a(y)} \dd y.
\]
\end{corollary}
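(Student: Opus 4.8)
The plan is to obtain \Cref{formula:sep} as the special case of \Cref{th:main} in which the linearized cell problem and the quadratic-domination constants both decouple into an $x$-part and a $Q$-part, so that the bounds of \Cref{th:main} can be evaluated in closed form. First I would compute the linearization of the separable operator. Since $F(Q,y) = a(y)F_0(Q)$ we have $\grad_Q F(Q,y) = a(y)\grad F_0(Q)$, so \eqref{LFaffine} gives
\[
  L^Q(M,y) = a(y)\bigl(\grad F_0(Q)\cdot(M-Q) + F_0(Q)\bigr) = a(y)\,L^Q_0(M),
\]
where $L^Q_0$ is the affine approximation of $F_0$ at $Q$. In particular $L^Q$ has the separable form $a(y)A_0 : M + f(y)$ appearing in \Cref{formula:HM}, with $A_0 = \grad F_0(Q)$ and $f(y) = a(y)\bigl(F_0(Q) - A_0 : Q\bigr)$.

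Next I would apply the Linear Homogenization Formula \Cref{formula:HM} to this $L^Q$: its invariant measure is $\rho^Q(y) = \HM(a)/a(y)$, and substituting into \eqref{LbarQ},
\[
  \overline{L^Q}(Q) = \int_{\Td} a(y)F_0(Q)\,\frac{\HM(a)}{a(y)}\dd y = \HM(a)\,F_0(Q),
\]
since $\Td$ has unit measure; this produces the middle term of \eqref{eq:corbounds}. I would then transfer the quadratic-domination bounds. Because $a(y) > 0$ and
\[
  F(M,y) - L^Q(M,y) = a(y)\bigl(F_0(M) - L^Q_0(M)\bigr),
\]
the hypothesis that $F_0$ is quadratically dominated with constants $C^\pm(Q)$ multiplies through by $a(y)$ to show that $F$ satisfies Assumption~\ref{def:quaddom} at $Q$. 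Substituting the resulting constants, together with the explicit $\rho^Q$ above, into the definition of $\overline{C^\pm}(Q)$ from \Cref{th:main} and pulling the $Q$-dependent factor $C^\pm(Q)$ outside the integral then gives the stated closed form for $\overline{C^\pm}(Q)$, so that \eqref{eq:bounds} becomes \eqref{eq:corbounds}.

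The argument is essentially bookkeeping, with the genuine content carried by \Cref{th:main}. The step I would treat as the main obstacle is checking that the hypotheses of \Cref{th:main} are genuinely inherited here: that $F$ itself --- not merely $F_0$ --- is quadratically dominated in the sense of Assumption~\ref{def:quaddom}, which is exactly where the positivity $a(y) > 0$ enters, and that the cell solution $u^Q$ retains enough regularity ($C^{2,\alpha}$) for $D^2 u^Q$ to be a bona fide function that can be integrated against $\rho^Q$. Once these are in place the computation above is routine.
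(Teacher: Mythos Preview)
Your proposal is correct and follows essentially the same route as the paper: linearize to obtain the separable operator $L^Q(M,y)=a(y)L^Q_0(M)$, read off $\rho^Q(y)=\HM(a)/a(y)$ and $\overline{L^Q}(Q)=\HM(a)F_0(Q)$ from \Cref{formula:HM}, identify $C^\pm(Q,y)=a(y)C^\pm(Q)$ from the positivity of $a$, and feed everything into \Cref{th:main}. One small caution: when you actually carry out that final substitution, the factor $a(y)$ from $C^\pm(Q,y)$ cancels against the $1/a(y)$ in $\rho^Q$ (exactly as the paper's proof remarks), producing $\tfrac12 C^\pm(Q)\HM(a)\int_{\Td}\|D^2u^Q(y)\|^2\dd y$; the extra $1/a(y)$ in the displayed formula for $\overline{C^\pm}(Q)$ appears to be a typographical slip in the statement rather than something your computation should reproduce.
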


\begin{proof}
  1. The formula for the linearization,
  \[
    \overline{L^Q}(Q) = \HM(a) F_0(Q)
  \]
  follows from the Linear Homogenization Formula (Lemma \ref{formula:HM}).

  2.  From linearization, we have that $\rho^Q(y) =
  \HM(a)/a(y)$.
  Using the definition, then the generalized semiconvexity/concavity constants for $F(Q,y)$ are given by
  \[
    C^+(Q,y) =  a(y)C^+(Q),
    \quad\text{ and }\quad
    C^-(Q,y) =  a(y)C^-(Q).
  \]
  Passing the constants and the invariant measure into Theorem~\ref{th:main}
  gives the bounds provided by \eqref{eq:corbounds}, since the coefficients $a(y)$ cancel.

\end{proof}

\begin{remark}
  In a companion paper \cite{ObermanFinlay_LP}, we show
that for convex operators in one dimension, 
  \begin{equation}\label{separable}
    \overline F(Q) =  \overline{L^Q}(Q) = \HM(a) F_0(Q).
  \end{equation}
\end{remark}

\begin{corollary}
  Consider the operator $H^{a_1,a_2}$ given by \eqref{eq:HP}.   Then
  \begin{multline*}
    \left| \overline{H^{a_1,a_2}}(Q) - \overline {L^Q}(Q) \right|
    \\    \leq
    \frac{1}{2\abs{\lambda_{\min}(Q)-\lambda_{\max}(Q)}} \int \abs{a_1(y)-a_2(y)} \, \| D^2u^Q(y) \|^2 \, d\rho^Q(y)
  \end{multline*}
\end{corollary}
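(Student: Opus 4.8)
The plan is to apply \Cref{th:main} to $F = H^{a_1,a_2}$, so the only substantive step is to produce explicit quadratic-domination constants $C^\pm(Q,y)$; the bound then drops out of \eqref{eq:bounds} by substitution. First I would reduce to the case that $Q$ is diagonal. Let $V = [v_1\ v_2]$ be the orthogonal matrix whose columns are the unit eigenvectors of $Q$, so $V^T Q V = \operatorname{diag}(\lambda_{\min}(Q),\lambda_{\max}(Q))$. Under $M \mapsto \widetilde M := V^T M V$, the eigenvalues of a symmetric matrix, the quantities $v_1^T M v_1$ and $v_2^T M v_2$ (which become the diagonal entries of $\widetilde M$), and the Frobenius norm $\norm{M-Q}$ are all preserved; hence $H^{a_1,a_2}(M,y) - L^Q(M,y)$ and $\norm{M-Q}$ are unchanged if we replace $Q$ by $\operatorname{diag}(\lambda_{\min}(Q),\lambda_{\max}(Q))$. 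Assuming henceforth $Q = \operatorname{diag}(\lambda_{\min}(Q),\lambda_{\max}(Q))$ and writing $N = M-Q$ with entries $n_{11},n_{22},n_{12}$, computing $\lambda_{\min}(M)$ and $\lambda_{\max}(M)$ from the closed-form $2\times2$ eigenvalue formula and subtracting \eqref{eq:LinearHP} gives
\[
  H^{a_1,a_2}(M,y) - L^Q(M,y) = \bigl(a_2(y) - a_1(y)\bigr)\bigl(t + \sqrt{t^2 + r^2}\bigr),
\]
where $r := n_{12}$, $\mu := \tfrac12\bigl(\lambda_{\max}(Q)-\lambda_{\min}(Q)\bigr) \ge 0$, and $t := \tfrac12(n_{11}-n_{22}) - \mu$; note $g := t + \sqrt{t^2+r^2} \ge 0$. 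This is the matrix analogue of the scalar computation behind \eqref{eq:CplusEx1}.

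The key estimate is then one line of AM--GM. With $\sigma := \tfrac12(n_{11}-n_{22}) = t + \mu$, the inequality $g \le \tfrac{1}{2\mu}(\sigma^2 + r^2)$ rearranges to $2\mu\sqrt{t^2+r^2} \le \mu^2 + t^2 + r^2$, which is the arithmetic--geometric mean inequality applied to $\mu^2$ and $t^2 + r^2$. Since also $\tfrac12\norm{N}^2 = \tfrac12(n_{11}^2 + n_{22}^2) + n_{12}^2 \ge \sigma^2 + r^2$, we obtain $0 \le g \le \tfrac{1}{2\mu}\cdot\tfrac12\norm{M-Q}^2$. Feeding this into the previous display and using only the sign of $a_2(y) - a_1(y)$ to bound $(a_2-a_1)g$ from above and below shows that $H^{a_1,a_2}$ is quadratically dominated above and below at $Q$ in the sense of Assumption~\ref{def:quaddom}, with
\[
  C^\pm(Q,y) = \pm\,\frac{\abs{a_1(y)-a_2(y)}}{\abs{\lambda_{\min}(Q)-\lambda_{\max}(Q)}}.
\]

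Finally I would substitute these constants into \Cref{th:main}: the averaged constants are $\overline{C^\pm}(Q) = \pm\tfrac{1}{2\abs{\lambda_{\min}(Q)-\lambda_{\max}(Q)}}\int\abs{a_1(y)-a_2(y)}\,\norm{D^2u^Q(y)}^2\dd\rho^Q(y)$, and the inequalities $\overline{C^-}(Q) \le \overline{H^{a_1,a_2}}(Q) - \overline{L^Q}(Q) \le \overline{C^+}(Q)$ are precisely the claimed two-sided bound. The only uncovered case is $\lambda_{\min}(Q) = \lambda_{\max}(Q)$ (that is, $\mu = 0$), where the right-hand side is $+\infty$ unless $a_1 \equiv a_2$ and the statement is vacuous — this is also exactly the case in which $v_1, v_2$, and hence the linearization \eqref{eq:LinearHP}, are not uniquely determined. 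I expect the only delicate point to be carrying out the explicit $2\times2$ eigenvalue expansion and keeping the signs straight in the first step; after that the argument is elementary.
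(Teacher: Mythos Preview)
Your proof is correct and follows the same overall strategy as the paper: apply \Cref{th:main} and compute the quadratic-domination constants $C^\pm(Q,y)$ after diagonalizing $Q$. The difference is in how those constants are obtained. The paper works first in $\R^2$, finding the best constant for $\max(q_1,q_2)$ by identifying the extremal point $(y_1,y_2)=(q_2,q_1)$, then rewrites $a_1\min+a_2\max$ as an affine function plus $(a_2-a_1)\max$ to reduce to that case, and finally lifts to matrices with a somewhat informal remark that non-diagonal $M$ can be ``controlled by a constant, or absorbed into the definition of the norm.'' You instead compute the $2\times2$ eigenvalues of $M=Q+N$ explicitly, obtain the closed form $H^{a_1,a_2}(M,y)-L^Q(M,y)=(a_2-a_1)\bigl(t+\sqrt{t^2+r^2}\bigr)$, and bound it by a one-line AM--GM argument. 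Your route is more self-contained: it handles the off-diagonal entry $n_{12}$ directly rather than by an appeal to norm equivalence, and it makes clear exactly where the factor $\abs{\lambda_{\min}-\lambda_{\max}}^{-1}$ comes from (the AM--GM pairing of $\mu^2$ with $t^2+r^2$). The paper's stepwise reduction, on the other hand, makes the connection to the scalar model \eqref{eq:CplusEx1} more transparent and would generalize more readily to other piecewise-linear operators built from $\max$. Your constants $C^\pm=\pm\abs{a_1-a_2}/\abs{\lambda_{\min}-\lambda_{\max}}$ are a symmetric (slightly wasteful) version of the paper's one-sided constants \eqref{HSCconstant}, but they yield the same final bound after integration.
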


\begin{proof}
  We apply Theorem~\ref{th:main} to $H^{a_1,a_2}$ given by \eqref{eq:HP}.
  The linearization is given by \eqref{eq:LinearHP}.  The invariant measure of the linear problem is given by the solution of \eqref{adjoint-equation} and the homogenized linear operator is given by \eqref{LbarQ} from Lemma~\ref{def:linear}.

  The main step is to work out the generalized semi-concavity constants.   We claim.
  \begin{equation}
    C^+(Q,x) =  \frac{ (a_2(x)-a_1(x))^+}{\abs{\lambda_{\min}-\lambda_{\max} }},
    \quad\text{ and }\quad
    C^-(Q,x) =  \frac{(a_1(x)-a_2(x))^-}{\abs{\lambda_{\min}-\lambda_{\max}} }.
    \label{HSCconstant}
  \end{equation}

  To prove this we proceed in steps.

  1. First, take $q\in \R^2$ and set
  $f(q) = \max(q_1, q_2)$.  Then $L^q(y) = \grad f(q)\cdot y$ away from the
  singular set $q_1=q_2$, since the
  function is homogeneous of order one. The constant $C^-(q) = 0$, since $f$ is
  convex.   We claim the optimal choice for $C^+(q)$ is given by
  \[
    C^+(q) = \frac{1}{\abs{q_2 - q_1}}
  \]
  for $q_1\not= q_2$.  To see this, we require
  \[
    \max(y_1,y_2) \leq \grad f(q)\cdot y + \frac{C^+(q)}{2} \abs{y-q}^2.
  \]
  It is easily verified that the extremal case for the inequality   occurs when $(y_1,y_2) = (q_2, q_1)$, which leads to the condition
  \[
    \abs{q_1-q_2} \leq {C^+(q)} \abs{q_1-q_2}^2.
  \]
  giving the result.

  2. Let 	$f(q_1, q_2) =  a_1\min(q_1, q_2) +  a_2\max(q_1, q_2)$.
  Rewrite $f(q_1, q_2) =  a_1(q_1 + q_2) +  (a_2-a_1) \max(q_1, q_2)$.  We can always subtract an affine function when computing the constants.  So the constants for $f$ are the same as the constants for $(a_2-a_1) \max(q_1, q_2)$.  In this case, using the result of step 1, we obtain
  \[
    C^+(x) =  \frac{(a_2-a_1)^+}{\abs{q_1-q_2}},
    \qquad
    C^-(x) = \frac{(a_1-a_2)^-}{\abs{q_1-q_2}}
  \]

  3.  Next consider for the two by two matrix $Q$, $h(Q,x) = a_1(x) \min(q_{11}, q_{22}) + a_2(x) \max(q_{11}, q_{22})$.  Then the previous step shows that the constants for $h$ are given by the previous ones (with $q_{11}$ replacing $q_1$ and $q_{22}$ replacing $q_2$.   Finally, since $H^{a_1, a_2}$ depends only on the eigenvalues of $Q$, without loss of generality, we can choose a coordinate system where $Q$ is diagonal when computing the generalized semiconcavity constants.   It remains to show that the generalized semi-concavity condition holds for a matrix, $M$.  If $M$ is diagonal the condition holds.  But if $M$ is not diagonal, then the change in the norm $\| M - Q\|^2$  can be controlled by a constant, or absorbed into the definition of the norm.
\end{proof}


\section{Computational Setting}

For our numerical experiments, we consider a wider class of separable and non-separable operators.

\subsection{PDE Operators}
\begin{definition}[Pucci-type operators]\label{defnPucci}
  For $\delta > 0$ and given functions $0 < \delta \leq  a(y)\leq A(y)$.  Write $b(y) = A(y)-a(y)$.
  Also write $t^+ = \max(t,0)$.
  Define, for $d=2$, the standard Pucci maximal operator, the Pucci-type
  operator, the smoothed Pucci-type operator, and a Monge-Ampere type operator
  respectively as
\begin{align}
     \label{Pucci_2d}
    P^{A,a}(Q,y) &= a(y) \tr Q + b(y)\left ( \lambda_{\min}^+(Q) + \lambda_{\max}^+(Q) \right )
    \\
    \label{eq:OurPucci}
    F^{A,a}(Q,y) &= a(y) \tr Q + b(y) \lambda_{\max}^+(Q).
    \\
    F_k^{A,a}(Q,y) &= a(y) \tr Q + b(y)\mathcal S_k(\lambda_{\min}(Q),\lambda_{\max}(Q),0)
    \label{eq:smoothP}
    \\
    M(Q,y) &=  a(y)\left( \tr(Q) + \lambda_{\min}^+(Q) \lambda^+_{\max}(Q)
    \right).
  \end{align}
  Here, for $x \in \R^m$, $S_k(x)$ is the smoothed maximum function,
  \begin{equation}
    \mathcal S_k(x) = \frac{\sum_{i=1}^m
  x_i \exp(k x_i)}{\sum_{i=1}^m \exp(k x_i)}.
  \label{eq:smoothmax}
  \end{equation}
  The function $S_k$ goes to the max as $k\to \infty$, and to the average as $k\to 0$.
\end{definition}

\begin{definition}[Periodic checkerboard, stripes, and random checkerboard coefficients]
  Define
  \begin{equation}a_0(y) = \begin{cases}1,~ y\in B\\r,~y\in
      W,\end{cases}\label{eq:a0}\end{equation} with $r>1$. The
  sets $B$ and $W$ are either black and white squares of a checkerboard;
  alternating black and white stripes of equal width; or a `random' checkerboard,
  with black and white squares distributed with equal probability, uniformly.
\end{definition}

\begin{remark}[Representation and visualization of the operators]
  The definition above agrees with the usual definition of the Pucci operator,
  \begin{equation}
    P^{A,a}(Q,y) = \sup \{ M : Q \mid a(y) I \ll M \ll A(y) I  \}.
    \label{eq:RealPucci}
  \end{equation}
  We can also rewrite
  \[
    F^{A,a}(Q,y) =
    \begin{cases}
      a(y) \tr Q & \text{$Q$ negative definite}
      \\
      H^{A,a}(Q,x), & \text{otherwise}.
    \end{cases}
  \]
\end{remark}

\begin{figure}[t]
  \centering
  \begin{subfigure}[b]{0.45\textwidth}
    \includegraphics[width=\textwidth]{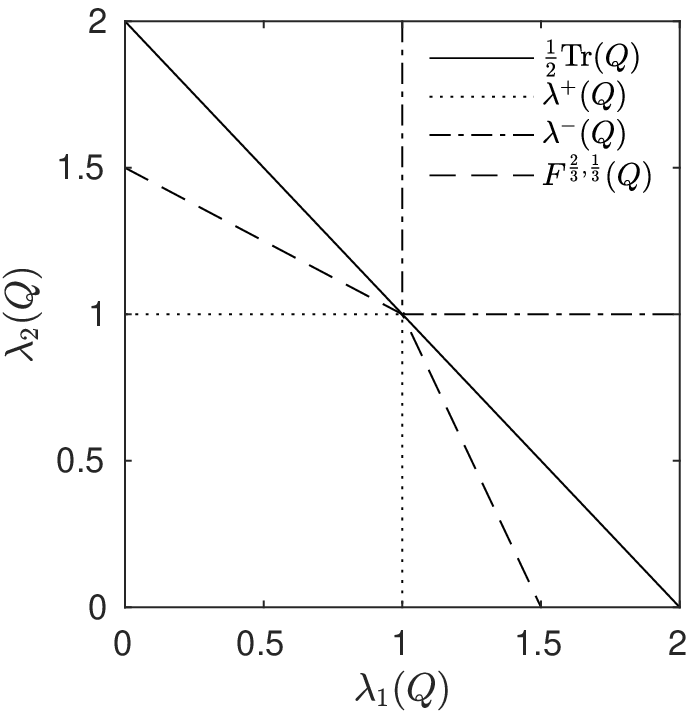}
    \caption{Example level sets}
    \label{fig:visualize}
  \end{subfigure}
  \hfill
  \begin{subfigure}[b]{0.475\textwidth}
    \includegraphics[width=\textwidth]{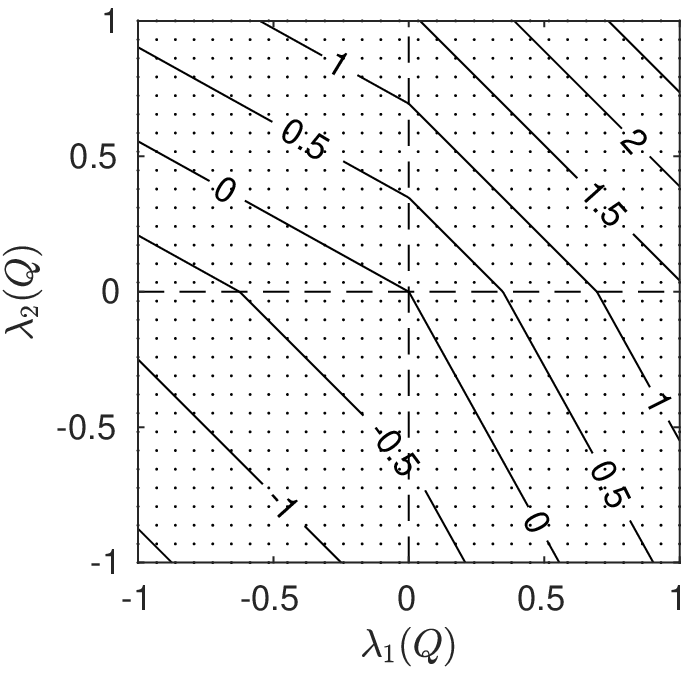}
    \caption{Pucci level sets}
    \label{fig:pucci-level-sets}
  \end{subfigure}
  \caption{Figure \ref{fig:visualize}: level set plot of several operators as
  function of the eigenvalues of $Q$.  Figure \ref{fig:pucci-level-sets}: Level
  sets of an example Pucci operator, $P^{\frac{5}{4},\frac{2}{3}}(Q)$. Points
indicate values of $Q$ that were homogenized.}
  \label{fig:pucci-figs}
\end{figure}

\begin{example} For the Pucci operator $P^{A,a}(Q,x)$ given by \eqref{Pucci_2d}, by convexity, $C^-(Q,x) = 0$ and
  \begin{equation}
    C^+(Q,x) =
    \begin{cases}
      \max\left\{ \frac{b(x)}{\tr(Q)},\frac{A(x)}{2\lambda_{\min}} \right\},
      &\text{ if } \lambda_{\min}, \lambda_{\max} > 0 \\
      \frac{b(x)}{2 \min(\abs{\lambda_{\min}},\abs{\lambda_{\max}} )}, &\text{ otherwise}.
    \end{cases}
    \label{eq:CQ_pucci}
  \end{equation}
\end{example}

\subsection{Numerical Method details}

In order to compute the errors, as a function of $Q$, we used a grid in the $\lambda_1-\lambda_2$ plane, and computed the linear and nonlinear
homogenization at the grid points.  Typical values can be see in
Figure~\ref{fig:pucci-level-sets}, where black points indicate the grid values
of $Q$ tested.

We remark on the numerical methods used throughout.
To compute $\overline{F} (Q)$ directly, we discretized with finite
differences and solved the parabolic equation $u_t + F(Q+D^2u,y)$ using an explicit
Euler method, to iteratively compute a steady state solution.   We discretized
using a convergent monotone scheme \cite{ObermanSINUM} and also using standard finite
differences.  The accuracy of the monotone scheme was less than the standard
finite differences, so we implemented a filtered scheme
\cite{froese2013convergent}.  In practice, the filtered scheme always selected
the accurate scheme, so in this instance, perhaps because the solutions are
$C^2$ and periodic, standard finite differences appear to converge.

For all the computations, to avoid trivial solutions, we solved with a right hand side function equal to a constant, and then subtracted the same constant from $\overline F(Q)$.

The computational domain was the torus $[0,1]^2$, divided into $20\times20$ equal
squares, each with 16 grid points per square.


\begin{remark}[Comparison with \cite{caffarelli2008numerical}]
  The problem of homogenizing $a_0(y) F^{A,a}(Q)$, was considered in \cite{caffarelli2008numerical}.
  In their case, the spatial coefficient $a_0(y)$ varies periodically and smoothly between 2 and 3,
  and their homogenized value for $\overline a_0$ was
  2.5 (which was the average of the coefficient $a_0(x,y) = \cos(\pi x)\cos(\pi y)$).  Our
  results using these coefficients was $\overline a_0 = 2.486$,  which is very close to the average.   However with coefficients which are more spread out, we obtain values far from the average.
\end{remark}

\begin{figure}[t]
  \begin{subfigure}[b]{0.52\textwidth}
    \includegraphics[width=\textwidth]{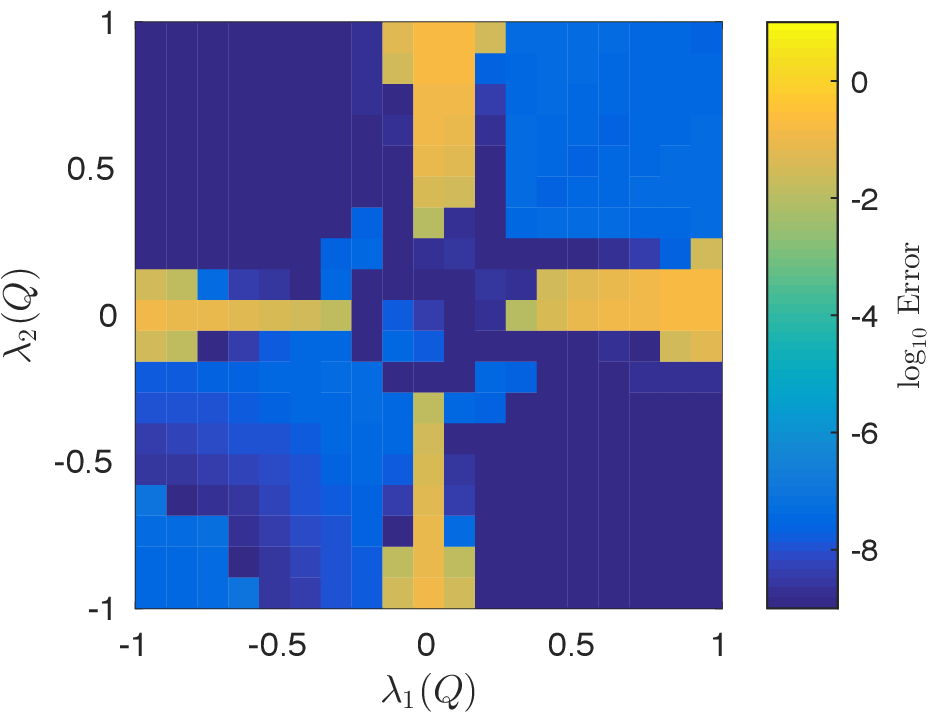}
    \caption{Error}
    \label{fig:real-pucci-error2}
  \end{subfigure}
  \hfill
  \begin{subfigure}[b]{0.45\textwidth}
    \includegraphics[width=\textwidth]{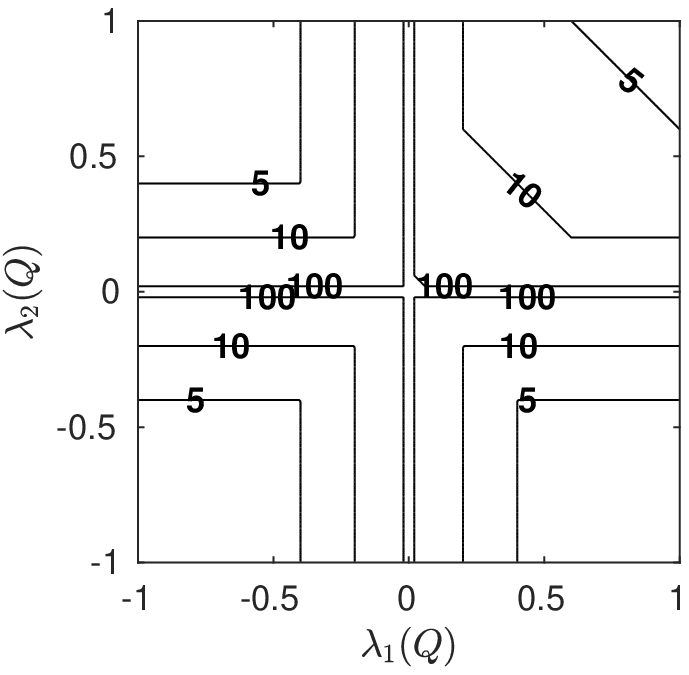}
    \caption{$\max_y C^+(Q,y)$}
    \label{fig:CQ}
  \end{subfigure}
  \caption{Homogenization of a separable Pucci example operator, $a(y)
    P^{3,1}$, on a periodic checker board, with coefficients of 1 or 2 ($r=2$).   \ref{fig:real-pucci-error2}: Error $\overline F(Q) - \overline L^Q(Q)$.
    Figure \ref{fig:CQ}:
    An upper bound of the semi-concavity constant $C^+(Q,y)$.
    The error is \num{1e-6} or less in the blue part of the domain.  In the yellow region it goes from 0.01 up to 0.15.
    The regions where the error is small coincide with smaller values of the semi-concavity constant.
  }
  \label{fig:real-pucci-chkbd-r=2}
\end{figure}

\section{Numerical results}\label{sec:Numerics}

\subsection{Numerical Results: separable operators}\label{sec:numSepPucci}
Here we check the homogenization error of the bound for separable operators in
two dimensions, from Corollory~\ref{formula:sep}.  We are in the convex case, so the lower bound is zero.

We performed numerical simulations on four operators, see Definition~\ref{defnPucci}.
\begin{itemize}
  \item $a_0(y) P^{3,1}(Q)$
  \item $a_0(y) F^{3,1}(Q)$
  \item $a_0(y) F^{3,1}_k(Q)$, with  $k=10$ and $k=0.1$
  \item $a_0(y) M(Q)$
\end{itemize}

In \Cref{fig:real-pucci-chkbd-r=2} we compare the error $\overline F(Q) -
\overline L^Q(Q)$ for a separable Pucci operator on a checkerboard, we also
illustrate the constant $C^+(Q,y)$.  In this case we have an analytical formula
for $\overline L^Q(Q)$).  This figure illustrates the Main Theorem: when the
constant is large the error from the linearization is high.  The error
is less than \num{1e-6} outside of a small region about the axis, and on the
order of $0.1$ near the axis.

In \Cref{fig:smoothmax}, we show how the error $\overline F(Q) - \overline
L^Q(Q)$ decreases as the operator becomes smoother.    The operator with the
smallest maximum curvature (\Cref{fig:psmooth10}) exhibits the smallest error.
As the operator becomes less smooth, the error increases.  For the smoothest
operator the global error is at most one percent (in the range of values shown
in the figure).  For the two sharper operators, there is still very high
accuracy away from the highest curvature regions.
We see that error of the
smooth operator, $F^{3,1}_{10}(Q)$, is slightly smaller than the non smooth
operator's error near the line $\lambda_1=\lambda_2$ (this is where the non
smooth operator is not differentiable). As the smoothing constant $k\rightarrow
0$, the error of the linearized homogenization decreases. For example, setting
$k=0.1$, as in \Cref{fig:psmooth10}, results in an error on the order of $0.01$.
In all cases, the error is near zero in a large part of the domain.  It
concentrates near the positive diagonal, where it is .1 to .4 for the non-smooth
operator, and similar for the operator with a small smoothing parameter.  A
larger smoothing parameter sends the error in a similar region to the range .002
to 0.01.
A small amount of smoothing has a small effect on the error.  More smoothing
leads to errors going from .1 to .002 in a similar part of the domain.

\begin{figure}[t]
  \centering
  \begin{subfigure}[b]{0.325\textwidth}
    \includegraphics[width=\textwidth]{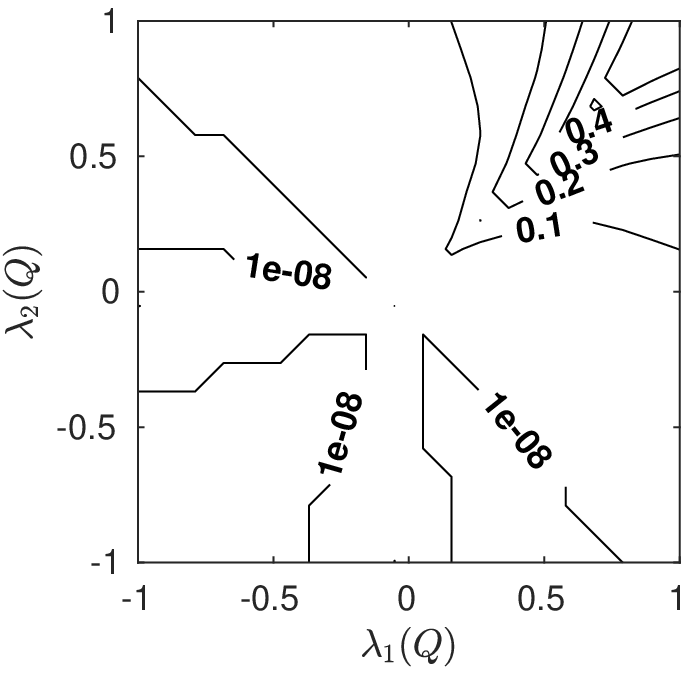}
    \caption{$a_0(y) F^{3,1}(Q),\, r=2$}
    \label{fig:psep}
  \end{subfigure}
  \hfill
  \begin{subfigure}[b]{0.325\textwidth}
    \includegraphics[width=\textwidth]{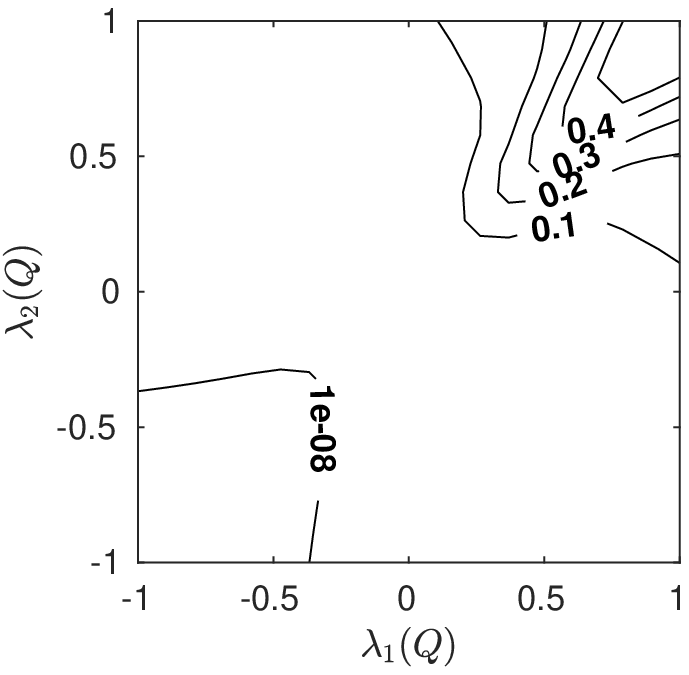}
    \caption{$a_0(y) F^{3,1}_{10}(Q),\, r=2$}
    \label{fig:psmooth100}
  \end{subfigure}
  \hfill
  \begin{subfigure}[b]{0.325\textwidth}
    \includegraphics[width=\textwidth]{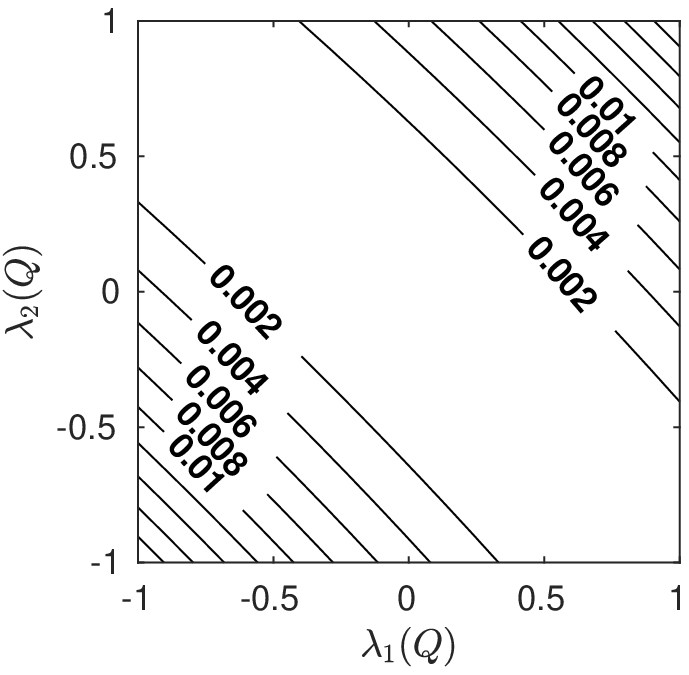}
    \caption{$a_0(y) F^{3,1}_{0.1}(Q),\, r=2$}
    \label{fig:psmooth10}
  \end{subfigure}
  \caption{Homogenization error for a smoothed Pucci type operator. The coefficients $a(y)$ are on a checker board with $r=2$ (i.e. $a$ = 1 or 2).
    The operators are defined in
    \Cref{sec:Numerics}. Figure \ref{fig:psep}: error on a Pucci like
    operator. \Cref{fig:psmooth100,fig:psmooth10}: error on a smoothed Pucci
  like operator.   }
  \label{fig:smoothmax}
\end{figure}

\begin{figure}[t]
  \centering
  \begin{subfigure}[b]{0.48\textwidth}
    \includegraphics[width=\textwidth]{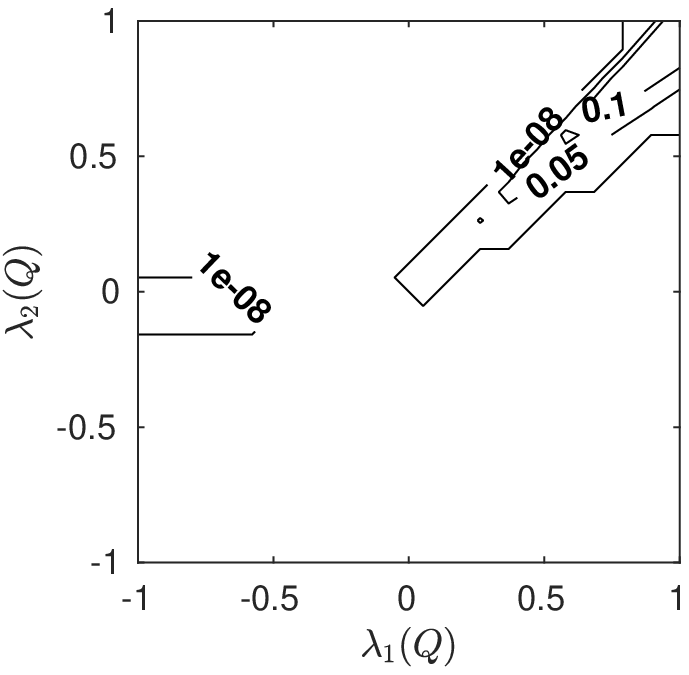}
    \caption{$r=1.2$}
    \label{fig:fake-pucci-stripes-r=1.2}
  \end{subfigure}
  \hfill
  \begin{subfigure}[b]{0.48\textwidth}
    \includegraphics[width=\textwidth]{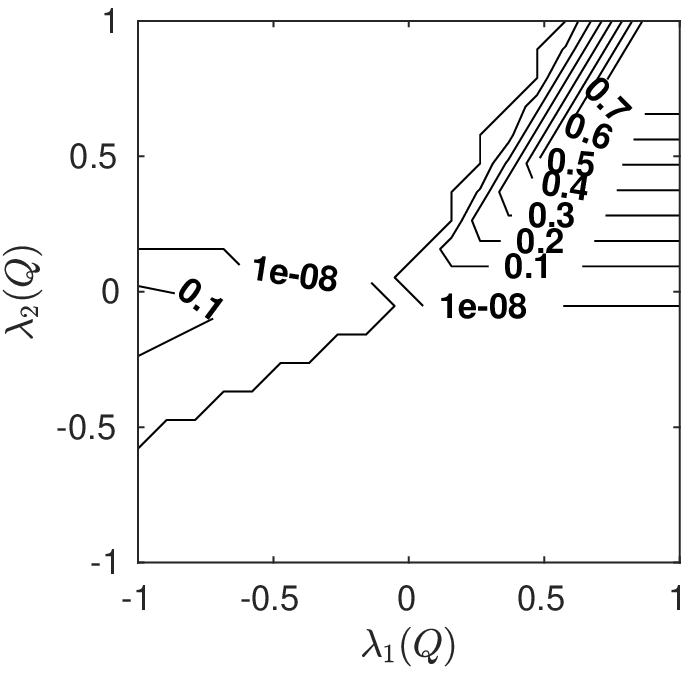}
    \caption{$r=2$}
    \label{fig:fake-pucci-stripes-r=2}
  \end{subfigure}
  \caption{Error for $a_0(y) F^{3,1}$ on stripes, with different ratios $r$.
}
\label{fig:fake-pucci-stripes}
\end{figure}

\begin{figure}[t]
  \centering
  \begin{subfigure}[b]{0.48\textwidth}
    \includegraphics[width=\textwidth]{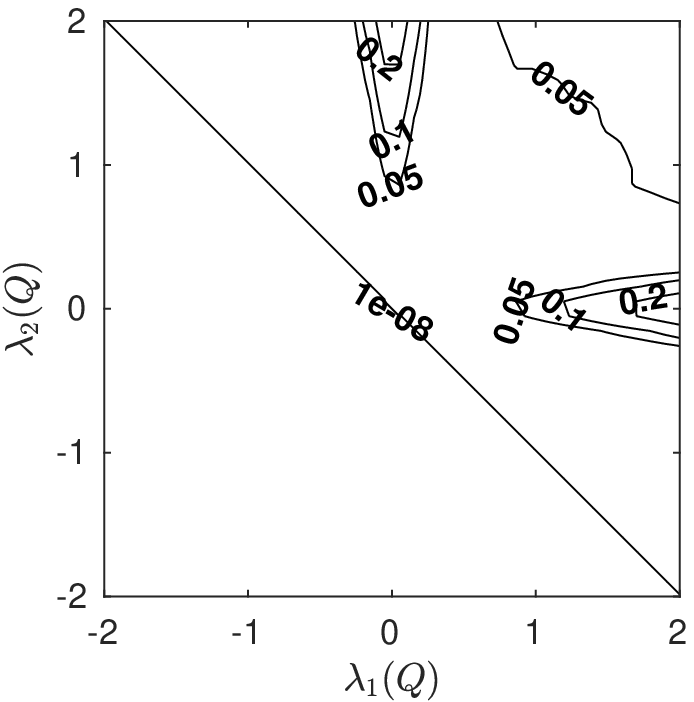}
    \caption{Checkerboard}
    \label{fig:ma-chkbd}
  \end{subfigure}
  \hfill
  \begin{subfigure}[b]{0.48\textwidth}
    \includegraphics[width=\textwidth]{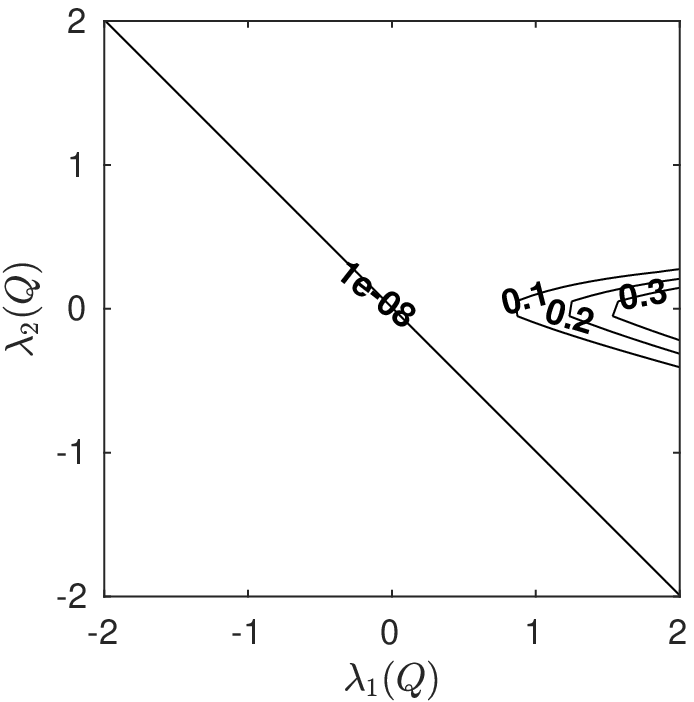}
    \caption{Stripes}
    \label{fig:ma-stripes}
  \end{subfigure}
  \caption{Error for $M(Q,y)$ with $r=2$, on a periodic checkerboard and
  on stripes.}
  \label{fig:ma}
\end{figure}

Figure \ref{fig:fake-pucci-stripes} presents the error for $a_0(y) F^{3,1}(Q)$ on
stripes. On stripes, the regions with large error are much smaller than the
operators on checkerboard. We hypothesize that this is because stripes have a
smoothing effect. The location where the large error is located
depends on the interplay between the operator and the direction of the stripes.
Given that in this example the homogenized operator is $\mathcal O(1)$, the
error here is particularly large.
In a companion paper, we will derive a closer lower bound for
$\overline{F^{A,a}}(Q)$, using the optimal invariant measure of the nonlinear
operator.

Figure \ref{fig:ma} shows error for $a_0(y) (\tr(Q) + \text{MA}(Q))$ on both
stripes and a periodic checkerboard. For the Monge Ampere type operator on
checkerboard, error is on the order of \num{1e-2} in the first quadrant, where
the curvature is bounded. Elsewhere the error is negligible.

As $r$ (the scaling coefficient of $a_0(y)$)  grows, so does the error. As expected, for the two Pucci
type operators on checkerboard, away from the regions where the curvature is
unbounded, the error is negligible: this is where the operators are linear.
Although we do not show it, for all figures, the error profile on the random
checkerboard is nearly identical to the periodic checkerboard.

\subsection{Numerical Results: non-separable operators}\label{sec:iso}
Now we consider nonseparable coefficients for $F^{A,a}(Q,y)$, refer to
Definition \ref{defnPucci}.

For both periodic and random checkerboard coefficients, the numerically computed
values of $\overline {F^{A,a}}(Q)$ depend only on the eigenvalues of $Q$, not on the
eigenvectors.  In addition,  $\overline{F^{A,a}}(Q)$ is homogeneous order one.  So the
entire function $\overline{F^{A,a}}(Q)$ is determined by the 1-level set of
$\overline{F^{A,a}}(Q)$ for
diagonal matrices $Q$.

We write
\begin{equation}\label{formula:guess}
  \overline{L^Q}(Q) = \overline A \lambda^+(Q) + \overline a \lambda^-(Q)
\end{equation}
where the coefficients are obtained by numerical homogenization of the linearized operator
\eqref{eq:LinearHP} when $Q$ had at least one positive eigenvalue.  (In the negative definite case the operator is linear and the error was insignificant).

We found that error was within 5\% for a range of values of $A$ and $a$ with coefficients which vary by
a factor of 10.

In Figure \ref{fig:nonseparable} we show the error on a periodic checkerboard, with
\[
  F^{A,a}(Q,y) = \begin{cases}
    \tr(Q),\, y\in B\\
    F^{4,1},\,y\in W.
  \end{cases}
\]
The error is on the order of \num{1e-1} near the line $\lambda^+=\lambda^-$ in
the first quadrant; on the order of \num{1e-2} in the second and fourth
quadrants; and negligible otherwise.
In Figure~\ref{fig:nonseparable} we plot the error  against the
numerically homogenized value for an the nonconvex operator alternating between
$F^{2,1}$ and $F^{1,\frac{4}{3}}$ on a periodic checkerboard.

\begin{figure}[t]
  \centering
  \begin{subfigure}[b]{0.48\textwidth}
    \includegraphics[width=\textwidth]{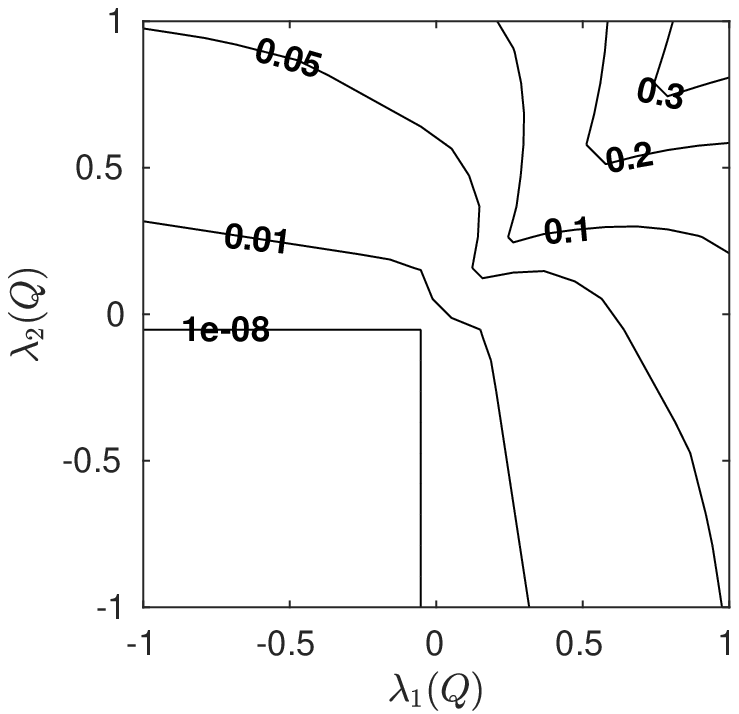}
  \end{subfigure}
  \begin{subfigure}[b]{0.48\textwidth}
    \includegraphics[width=\textwidth]{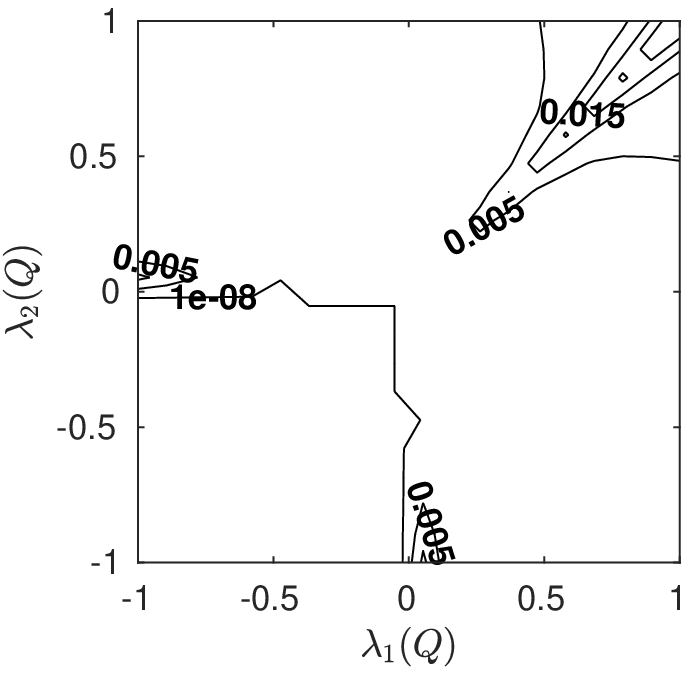}
  \end{subfigure}
  \caption{Error for the non-separable operator on a periodic checkerboard. Left: alternating between $F^{1,1}$ and
  $F^{4,1}$.  Right: alternating between $F^{2,1}$ and
  $F^{1,\frac{4}{3}}$}
  \label{fig:nonseparable}
\end{figure}

\subsubsection{Further experiments} We let $A$ and $a$ each take two positive values in
periodic checkerboard pattern. In the second, we let $A$ and $a$ each take two
positive values in a random checkerboard, drawn randomly from a Bernoulli
trial with probability $p$.  We checked both when $p=\frac{1}{2}$ and other
values of $p$. When $p=\frac{1}{2}$ the homogenized operator on the random
checkerboard is identical to the homogenization on the periodic checkerboard.
Finally, we also checked the case when $A$ and $a$ are each drawn from a uniform
distribution with positive support. In all of these cases, the numerically
homogenized operator is (numerically) isotropic, homogeneous order one, and
agrees closely with $\overline F$ in the approximate formula \eqref{formula:guess}.

\section{Conclusions}
We studied the error between the homogenization of the linearized operator and
the fully nonlinear homogenization.  We obtained upper and lower bounds on the
error in terms of the generalized semiconvavity constants of the operator.

We also performed numerical calculations.  For the class of operators we
studied, linearization was very accurate for a wide range of values of $Q$, with
negligible error in some cases.  The numerically computed errors were small, and
concentrated around regions of high curvature in $Q$ of the operator $F(Q,x)$.
Errors grew with the degree of nonlinearity and with the range of the
coefficients.

The numerical results are consistent with the bounds, although in some cases the error was smaller than was predicted by the bounds.

%
%
%

\bibliographystyle{alpha}
\bibliography{HomogPucci}

\begin{thebibliography}{OTV09}

\bibitem[AS14]{Armstrong2014}
Scott~N. Armstrong and Charles~K. Smart.
\newblock Quantitative {Stochastic} {Homogenization} of {Elliptic} {Equations}
  in {Nondivergence} {Form}.
\newblock {\em Archive for Rational Mechanics and Analysis}, 214(3):867--911,
  December 2014.

\bibitem[BLP11]{bensoussan2011asymptotic}
Alain Bensoussan, Jacques-Louis Lions, and George Papanicolaou.
\newblock {\em Asymptotic analysis for periodic structures}, volume 374.
\newblock American Mathematical Soc., 2011.

\bibitem[CC95]{caffarelli1995fully}
Luis~A Caffarelli and Xavier Cabr{\'e}.
\newblock {\em Fully nonlinear elliptic equations}, volume~43.
\newblock American Mathematical Soc., 1995.

\bibitem[CG08]{caffarelli2008numerical}
LA~Caffarelli and Roland Glowinski.
\newblock Numerical solution of the {D}irichlet problem for a {P}ucci equation
  in dimension two. {A}pplication to homogenization.
\newblock {\em Journal of Numerical Mathematics}, 16(3):185--216, 2008.

\bibitem[ES08]{Engquist2008}
Bj{\"o}rn Engquist and Panagiotis~E Souganidis.
\newblock Asymptotic and numerical homogenization.
\newblock {\em Acta Numerica}, 17:147--190, 2008.

\bibitem[Eva82]{evans_classical_1982}
Lawrence~C Evans.
\newblock Classical solutions of fully nonlinear, convex, second-order elliptic
  equations.
\newblock {\em Communications on Pure and Applied Mathematics}, 35(3):333--363,
  1982.

\bibitem[Eva89]{Evans1989}
Lawrence~C Evans.
\newblock The perturbed test function method for viscosity solutions of
  nonlinear {PDE}.
\newblock {\em Proceedings of the Royal Society of Edinburgh: Section A
  Mathematics}, 111(3-4):359--375, 1989.

\bibitem[FO09]{Froese2009}
Brittany~D. Froese and Adam~M. Oberman.
\newblock Numerical averaging of non-divergence structure elliptic operators.
\newblock {\em Communications in Mathematical Sciences}, 7(4):785--804, 2009.

\bibitem[FO13]{froese2013convergent}
Brittany~D. Froese and Adam~M. Oberman.
\newblock Convergent filtered schemes for the {M}onge-{A}mp\`ere partial
  differential equation.
\newblock {\em SIAM J. Numer. Anal.}, 51(1):423--444, 2013.

\bibitem[FO17]{ObermanFinlay_LP}
Chris Finlay and Adam~M Oberman.
\newblock Approximate homogenization of convex nonlinear elliptic {PDE}s.
\newblock {\em arXiv preprint arXiv:1710.10309}, 2017.

\bibitem[GO04]{GO04}
Diogo~A Gomes and Adam~M Oberman.
\newblock Computing the effective {Hamiltonian} using a variational approach.
\newblock {\em SIAM journal on control and optimization}, 43(3):792--812, 2004.

\bibitem[Jen88]{jensen_maximum_1988}
Robert Jensen.
\newblock The maximum principle for viscosity solutions of fully nonlinear
  second order partial differential equations.
\newblock {\em Archive for Rational Mechanics and Analysis}, 101(1):1--27,
  1988.

\bibitem[Kry84]{krylov_boundedly_1984}
Nikolai~Vladimirovich Krylov.
\newblock Boundedly nonhomogeneous elliptic and parabolic equations in a
  domain.
\newblock {\em Izvestiya: Mathematics}, 22(1):67--97, 1984.

\bibitem[LYZ11]{luo2011new}
Songting Luo, Yifeng Yu, and Hongkai Zhao.
\newblock A new approximation for effective hamiltonians for homogenization of
  a class of hamilton--jacobi equations.
\newblock {\em Multiscale Modeling \& Simulation}, 9(2):711--734, 2011.

\bibitem[Obe06]{ObermanSINUM}
Adam~M. Oberman.
\newblock Convergent difference schemes for degenerate elliptic and parabolic
  equations: {H}amilton-{J}acobi equations and free boundary problems.
\newblock {\em SIAM J. Numer. Anal.}, 44(2):879--895 (electronic), 2006.

\bibitem[OTV09]{oberman2009homogenization}
Adam~M Oberman, Ryo Takei, and Alexander Vladimirsky.
\newblock Homogenization of metric {H}amilton-{J}acobi equations.
\newblock {\em Multiscale Modeling \& Simulation}, 8(1):269--295, 2009.

\end{thebibliography}
\end{document}